\documentclass[a4paper, 12pt, oneside]{amsart}

\usepackage{amsfonts}
\usepackage{amstext}
\usepackage{amsbsy}
\usepackage{amsopn}
\usepackage{eucal}
\usepackage[initials, non-sorted-cites]{amsrefs}
\usepackage{vmargin}
\usepackage[pdftex]{graphicx}
\usepackage{mathtools}
\usepackage{amssymb}
\usepackage{mathrsfs}
\usepackage{stmaryrd}
\usepackage{amsmath}
\usepackage{amsthm}
\usepackage{comment}
\usepackage{enumerate}
\usepackage[usenames,dvipsnames]{xcolor}








\usepackage{times}

\usepackage[T1]{fontenc}

\usepackage[active]{srcltx}

\usepackage{enumitem}

\newcommand{\bbR}{\mathbb{R}}
\newcommand{\bbS}{\mathbb{S}}

\newcommand{\bbN}{\mathbb{N}}
\newcommand{\mfg}{\mathfrak{g}} 

\newcommand{\mfk}{\mathfrak{k}}

\newcommand{\mfm}{\mathfrak{m}}

\newcommand{\tX}{\tilde{X}}

\newcommand{\Ad}{\mathrm{Ad}}
\newcommand{\GL}{\mathrm{GL}}
\newcommand{\grad}{\mathrm{grad}}
\newcommand{\tr}{\mathrm{tr}}

\newcommand{\remark}{\noindent\textbf{Remark}}

\newtheorem{theorem}{Theorem}[section]

\newtheorem{lemma}[theorem]{Lemma}
\newtheorem{proposition}[theorem]{Proposition}

\author{Gabor Lippner \textsuperscript{$1$}}
\author{Dan Mangoubi \textsuperscript{$2$}}
\author{Zachary McGuirk \textsuperscript{$3$}}
\author{Rachel Yovel \textsuperscript{$4$}}

\address{\textsuperscript{$1$} \small Gabor Lippner, Northeastern University, 360 Huntington Ave, Boston, MA 02115, United States} 
\email{g.lippner@northeastern.edu}
\address{\textsuperscript{$2$} \small Dan Mangoubi, Einstein Institute of Mathematics, Edmond J. Safra Campus, The Hebrew University of Jerusalem, Jerusalem 91904, Israel} 
\email{dan.mangoubi@mail.huji.ac.il}
\address{\textsuperscript{$3$} \small Zachary McGuirk, Einstein Institute of Mathematics, Edmond J. Safra Campus, The Hebrew University of Jerusalem, Jerusalem 91904, Israel} 
\email{zachary.mcguirk@mail.huji.ac.il}
\address{\textsuperscript{$4$} \small Rachel Yovel, Einstein Institute of Mathematics, Edmond J. Safra Campus, The Hebrew University of Jerusalem, Jerusalem 91904, Israel} 
\email{rachel.yovel@mail.huji.ac.il}

\subjclass[2020]{Primary 43A85; Secondary 31C05, 22E30}
\keywords{Symmetric spaces, harmonic functions, Laplace powers, frequency function, absolute monotonicity, convexity}

\begin{document}

\title{Strong Convexity for harmonic functions on compact symmetric spaces}

\begin{abstract}
Let $h$ be a harmonic function defined on a spherical disk. It is shown that~$\Delta^k |h|^2$ is nonnegative for all~$k\in\bbN$ where $\Delta$ is the Laplace-Beltrami operator. This fact is generalized
to harmonic functions defined on a disk in a  normal homogeneous compact Riemannian manifold, and in particular in a symmetric space of the compact type.
This complements a similar property for harmonic functions on $\bbR^n$ discovered by the first two authors
and is related to strong convexity of the $L^2$-growth function of harmonic functions.
\end{abstract}

\maketitle

\section{Introduction}
Let $\Delta_E$ denote the Laplace operator on the Eudlidean $n$-dimensional space, $\bbR^n$, and let $h$ be a harmonic function defined on a disk in~$\bbR^n$. It was proved in \cite{lipp-man} that $\Delta_E^k |h|^2$ is nonnegative for all~$k$. 
The main motivation to study powers of the Laplace operators comes from the well known classical fact that the Taylor series expansion
of the spherical mean function is expressed in terms of the powers $\Delta^k$ (see e.g.~\cite{poritsky}), and as such 
this positivity statement has given a new argument for the logarithmic convexity of the~$L^2$-growth function of harmonic functions \cite{agmon} or, equivalently, the monotonicity of the frequency function~\cite{almgren}. Moreover, this series of linear inequalities was important in deducing a \emph{discrete} version of the three circles theorems for harmonic functions.

The main goal of this paper is to show that, maybe surprisingly, an analogous convexity phenomenon also holds on the standard sphere $\bbS^n\subset \bbR^{n+1}$ and more generally on symmetric spaces of the compact type.
In fact, it is shown to hold on a  more general family of spaces, namely, normal homogeneous compact Riemannian manifolds, which will be discussed in~\S\ref{sec:normal-homogeneous}.
 
To avoid an overly technical introduction, we first state the theorem for spheres.
Let~$\Delta_S$ denote the Laplace-Beltrami operator on the standard unit sphere, $\bbS^n$, and denote by $B\left(\rho\right)  \subset \bbS^{n}$  a ball of radius $0<\rho<\pi$.  We prove
\begin{theorem} 
\label{thm:spheres}
Let $h\colon B\left(\rho\right) \rightarrow \mathbb{R}$ be a harmonic function. Then, 
$$\forall k\in\bbN\quad \Delta_S^k |h|^2 \geq 0 $$ 
\end{theorem}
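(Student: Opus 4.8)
The plan is to realize the round sphere as the normal homogeneous space $\bbS^n=SO(n+1)/SO(n)$ and to replace the second-order operator $\Delta_S$ by the Casimir element of $\mathfrak{so}(n+1)$, reducing the whole statement to a short algebraic induction. Fix an $\Ad$-invariant inner product on $\mathfrak{g}=\mathfrak{so}(n+1)$ inducing the unit round metric, and let $X_1,\dots,X_m$ be an orthonormal basis, $m=\dim\mathfrak{so}(n+1)$. Each $X_a$ generates a one-parameter group of isometries of $\bbS^n$ and hence a Killing vector field on $\bbS^n$ --- concretely the angular momentum fields $x_i\partial_j-x_j\partial_i$ --- which I also denote $X_a$ and regard as a first-order differential operator on functions.

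Two facts drive the argument. First, on the normal homogeneous space the Laplace-Beltrami operator is the Casimir, $\Delta_S=\sum_{a}X_a^2$ as operators on functions (with the sign convention for which $\Delta_S|h|^2\ge0$). Second, since each $X_a$ integrates to isometries and $\Delta_S$ is isometry-invariant, $[\Delta_S,X_a]=0$; equivalently, the Casimir is central in the universal enveloping algebra. From the first fact and the Leibniz rule for the derivations $X_a$ I get, for any $f$,
$$\Delta_S(f^2)=\sum_a X_a^2(f^2)=2\sum_a (X_a f)^2+2f\,\Delta_S f,$$
so $\Delta_S(f^2)=2\sum_a(X_af)^2\ge0$ whenever $f$ is harmonic. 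From the second fact, if $\Delta_S f=0$ then $\Delta_S(X_a f)=X_a(\Delta_S f)=0$, so each $X_a$ preserves harmonicity. This is the exact substitute for the Euclidean fact that partial derivatives of harmonic functions are harmonic.

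These two observations combine in an induction on $k$. Assuming $\Delta_S^{k-1}(g^2)=2^{k-1}\sum_{|I|=k-1}(X_I g)^2$ for every harmonic $g$, where $X_I=X_{i_1}\cdots X_{i_{k-1}}$ runs over ordered multi-indices, I apply it to each harmonic function $g=X_a h$ after writing $\Delta_S^k(h^2)=\Delta_S^{k-1}\bigl(2\sum_a (X_a h)^2\bigr)$. This yields
$$\Delta_S^k|h|^2=2^k\sum_{a_1,\dots,a_k}\bigl(X_{a_1}\cdots X_{a_k}h\bigr)^2\ge0,$$
which is the theorem. Note that the $X_a$ do not commute, but this is harmless: the right-hand side is a sum of squares regardless of the order of the operators, so non-commutativity never spoils positivity. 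Everything here is a pointwise identity between differential operators applied to $h$, so working on the ball $B(\rho)$ rather than on all of $\bbS^n$ causes no difficulty.

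The main obstacle is the first fact, the identification $\Delta_S=\sum_a X_a^2$ of the Laplace-Beltrami operator with the Casimir. This is exactly where the normal (naturally reductive) structure of the metric enters: at the base point $o=eK$ the fields coming from $\mathfrak{m}$ form an orthonormal frame while those from the isotropy $\mathfrak{k}$ vanish, and one must check that the first-order correction $\sum_a\nabla_{X_a}X_a$, which a priori separates $\sum_a X_a^2$ from $\operatorname{div}\operatorname{grad}$, vanishes at $o$. For a naturally reductive metric the curves $t\mapsto\exp(tZ)\cdot o$ with $Z\in\mathfrak{m}$ are geodesics, so these acceleration terms indeed vanish, and the identity then propagates by homogeneity. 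Once this is in hand --- established in the general setting of \S\ref{sec:normal-homogeneous} --- the induction above proves the result verbatim on any normal homogeneous compact manifold, the sphere being the special case $G/K=SO(n+1)/SO(n)$.
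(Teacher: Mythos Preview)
Your proposal is correct, and the inductive core --- write $\Delta_S=\sum_a X_a^2$ with $[\Delta_S,X_a]=0$, compute $\Delta_S|h|^2=2\sum_a|X_ah|^2$, note each $X_ah$ is again harmonic, and iterate --- is exactly the paper's argument. The one difference is in how the sum-of-squares identity is established: the paper's Section~2 proof of Theorem~\ref{thm:spheres} derives $\Delta_S=\sum_{i<j}X_{ij}^2$ by a direct elementary computation in ambient $\bbR^{n+1}$ (Proposition~\ref{prop:spheres-sum-of-squares}), using only the dilation-invariant extension and the polar decomposition of $\Delta_E$, with no Lie theory at all. You instead invoke the general Casimir/Laplacian identification on normal homogeneous spaces (the content of \S\ref{sec:normal-homogeneous} and Theorem~\ref{thm:laplace-sum-of-squares}), specialized to $SO(n+1)/SO(n)$. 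Both routes are in the paper --- yours is effectively the proof of Theorem~\ref{thm:homog} restricted to the sphere, which the paper notes subsumes Theorem~\ref{thm:spheres} --- and the paper deliberately separates them so as to have a self-contained spherical argument free of Lie-theoretic machinery.
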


More generally, we show
\begin{theorem}
\label{thm:homog}
Let $G$ be a semisimple, connected, compact Lie group with a fixed bi-invariant Riemannian metric. 
Let $K$ be a closed subgroup. Consider the homogeneous Riemannian manifold $M=G/K$ with the corresponding induced metric. Let~$h$ be a harmonic function defined on an open subset of~$M$. 
Then $$\forall k\in\bbN\quad  \Delta_M^k |h|^2\geq 0$$
where $\Delta_M$ is the Laplace-Beltrami operator on $M$.
\end{theorem}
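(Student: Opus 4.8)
The plan is to reduce the statement to the case $M=G$, a compact Lie group with its bi-invariant metric, and there to exploit the fact that the Laplace--Beltrami operator is the Casimir operator, which commutes with every invariant vector field. First I would set up the Riemannian submersion $\pi\colon G\to M=G/K$. Let $\mfg=\mfk\oplus\mfm$ be the orthogonal reductive decomposition, let $\{X_a\}$ and $\{X_\alpha\}$ be orthonormal bases of $\mfk$ and $\mfm$, so that their union is an orthonormal basis of $\mfg$, and write $\tX$ for the left-invariant field generated by $X$. For $f$ on $M$ the pullback $\tilde{f}=f\circ\pi$ is right-$K$-invariant, hence $\tX_a\tilde{f}=0$ for $X_a\in\mfk$ (the vertical, fiber directions), and so $\Delta_G\tilde{f}=\sum_a\tX_a^2\tilde{f}+\sum_\alpha\tX_\alpha^2\tilde{f}=\sum_\alpha\tX_\alpha^2\tilde{f}$. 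Since $M$ is normal homogeneous, hence naturally reductive, the curves $t\mapsto\pi(\exp(tX_\alpha))$ are geodesics of $M$ with initial velocity $X_\alpha$; evaluating $\tX_\alpha^2\tilde{f}$ at the identity as a second derivative along such a geodesic identifies it with $\mathrm{Hess}_M f(X_\alpha,X_\alpha)$, and summing over the orthonormal basis $\{X_\alpha\}$ gives $\Delta_M f$. By $G$-homogeneity this holds everywhere, so
$$\Delta_G\tilde{f}=\widetilde{\Delta_M f}\qquad\text{for every right-}K\text{-invariant }\tilde{f}.$$
In particular $\tilde{h}$ is harmonic on $\pi^{-1}(U)\subset G$, and since each $\Delta_M^j|h|^2$ again pulls back to a right-$K$-invariant function, iterating yields $\Delta_G^k\,\tilde{h}^{\,2}=\widetilde{\Delta_M^k|h|^2}$. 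As pullback along a surjective submersion is injective on functions, it suffices to prove $\Delta_G^k\,\tilde{h}^{\,2}\ge 0$, i.e.\ the theorem for $G$.

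Now for the group. With respect to the orthonormal basis $\{X_i\}$ of $\mfg$ the Laplace--Beltrami operator is $\Delta_G=\sum_i\tX_i^2$, the image in the algebra of left-invariant differential operators of the Casimir $\Om=\sum_i X_i^2\in U(\mfg)$. The crucial point is that $\Om$ is \emph{central}: since the inner product is $\Ad$-invariant, the structure constants $c_{ijk}=\lL[X_i,X_j],X_k\rR$ are totally antisymmetric, and $[\Om,X_j]=\sum_{i,k}c_{ijk}(X_iX_k+X_kX_i)=0$ because the coefficient is antisymmetric in $(i,k)$ while the operator factor is symmetric. Hence $\Delta_G$ commutes with every $\tX_i$, and therefore with every iterated field $\tX_{i_1}\cdots\tX_{i_m}$.

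This commutation is the whole engine. It shows harmonicity propagates to all derivatives: if $\Delta_G h=0$ then $\Delta_G(\tX_{i_1}\cdots\tX_{i_m}h)=\tX_{i_1}\cdots\tX_{i_m}\Delta_G h=0$. Combined with the Leibniz identity $\Delta_G(u^2)=2u\,\Delta_G u+2\sum_i(\tX_i u)^2$, which for harmonic $u$ reduces to $\Delta_G(u^2)=2\sum_i(\tX_i u)^2$, an induction on $k$ applied successively to the harmonic functions $u=\tX_{i_1}\cdots\tX_{i_m}h$ produces the closed form
$$\Delta_G^k|h|^2=2^k\sum_{i_1,\dots,i_k}\bigl(\tX_{i_1}\cdots\tX_{i_k}h\bigr)^2\ \ge\ 0,$$
which is manifestly nonnegative and, after the reduction above, proves the theorem.

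I expect the main obstacle to be the first step rather than the second: the group computation is clean precisely because the Casimir is central, whereas turning the abstract submersion into the operator identity $\Delta_G\tilde{f}=\widetilde{\Delta_M f}$ requires checking that the invariant Laplacian contributes no first-order (mean-curvature) term, which is exactly where natural reductivity and the geodesic description of $\exp(tX_\alpha)$ enter. Once that identity is secured, no curvature hypotheses intervene in the positivity at all; semisimplicity is used only to furnish a bi-invariant metric via the Killing form and plays no role in the inequality itself.
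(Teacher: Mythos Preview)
Your argument is correct and rests on the same engine as the paper's: the Casimir element is central in $U_\mfg$, so the Laplacian commutes with the generating vector fields, derivatives of harmonic functions stay harmonic, and the Leibniz identity produces a sum of squares by induction. The only difference is organizational. You lift the whole problem to $G$ via the submersion identity $\Delta_G\tilde f=\widetilde{\Delta_M f}$ and run the induction upstairs with the left-invariant fields $\tX_i$; the paper instead projects the Casimir down to $M$, obtaining vector fields $X_j^+$ on $M$ (generators of the $G$-action) with $\Delta_M=\sum_j (X_j^+)^2$ and $[X_j^+,\Delta_M]=0$, and carries out the induction directly on $M$. The paper's packaging yields a slightly stronger intermediate statement---an explicit sum-of-squares formula for $\Delta_M$ by fields commuting with it---while your route is marginally quicker for the bare inequality, since centrality of $\Om$ is immediate on $G$ and you avoid checking commutation on the quotient separately. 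Both arguments invoke natural reductivity at exactly the same spot (your geodesic property of $\pi(\exp tX_\alpha)$ is the paper's $\nabla_{X^+_o}X^+=0$) to kill the first-order term when identifying the Laplacian with a pure sum of second derivatives.
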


 On compact Riemannian symmetric spaces of rank one or on two-point compact homogeneous spaces one can interpret Theorem~\ref{thm:homog}
 as an absolute monotonicity result for the spherical mean function,
 since in these cases the spherical mean function can be expressed in terms of the Laplace powers~\cite{helgason-acta}.

The proof in \cite{lipp-man} is strongly based on the invariance of harmonic functions under translations. On a general Lie group equipped with an invariant Riemannian structure, harmonic functions are invariant under right translations, while, clearly, in the class of Lie groups possessing bi-invariant Riemannian structures we again recover the invariance of harmonic functions under all left and right translations.
 The paper deals with harmonic functions on quotient spaces of Lie groups equipped with a bi-invariant Riemannian metric.
The proof of Theorems~\ref{thm:spheres} and~\ref{thm:homog} is based on the possibility to write the Laplace-Beltrami operator~$\Delta$ as a sum of squares 
of vector fields which commute with $\Delta$. While in~$\bbR^n$ one can represent the Laplace-Beltrami operator as a sum of~$n$ squares, one needs more than~$n=\dim M$ vector fields to do it on $M=G/K$.
The number of vector fields needed is the dimension of~$G$.
\vspace{2ex}

\noindent\textbf{Organization of the paper.}
Although Theorem~\ref{thm:homog} generalizes the special case in Theorem~\ref{thm:spheres}, we give the proof of that special case first as it is  direct and different than the one for the general case. In addition, the proof in the case of the sphere can be made without any reference to notions from Lie Theory.\vspace{1ex}

\noindent\textbf{Acknowledgements}. Theorem \ref{thm:spheres} was proved in \cite{yovel} by long computations. We are grateful to Nir Avni for a valuable suggestion which led us to find a conceptual explanation for the result in \cite{yovel}. The correct class of homogeneous spaces for which our result in the spherical setting holds was unclear to us for a while and we are grateful to Joseph Bernstein for directing us towards the class of symmetric spaces. R.Y. would like to thank Pavel Giterman for helpful insights.
D.M., Z.M. and R.Y. were supported by ISF grant nos. 753/14 and 681/18.
G.L. and D.M. were supported by BSF grant no. 2018174.

\section{The case of the sphere: Proof of Theorem \ref{thm:spheres}}
For the $n$-dimensional Euclidean space, the Laplace operator can be written as a sum of squares of $n$ orthogonal constant vector fields. The proposition below shows that for the unit sphere~$\bbS^n$, the Laplace-Beltrami operator $\Delta_S$ can be written as a sum of squares of~$\binom{n+1}{2}$ rotation generating vector fields. This fact is well known and important in quantum mechanics of the hydrogen atom (see the two dimensional case in \cite{kirillov}*{ch. 4.9}). We include its proof for completeness and as an alternative to the general Lie group theory proof in the next section.

First, we define the vector fields concerned. Let~$x_1,\dots, x_{n+1}$ be the standard coordinates for~$\mathbb{R}^{n+1}.$ Let $R_{ij}(t)$ be the rotation of angle $t$ in the oriented $x_ix_j$ plane. Then, we define $X_{ij}$ to be the vector field generating the flow $R_{ij}(t),$ namely, $X_{ij}=x_i\partial_j-x_j\partial_i.$ 

\begin{proposition} \label{prop:spheres-sum-of-squares}
The Laplace-Beltrami operator on the sphere can be written as
\[\Delta_S=\sum _{1\leq i<j\leq n+1} X_{ij}^2.\]
\end{proposition}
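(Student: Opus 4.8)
The plan is to prove the identity $\Delta_S = \sum_{i<j} X_{ij}^2$ by exhibiting both sides as operators acting on functions and verifying their equality. I would work with the standard extrinsic picture: every function $f$ on $\bbS^n$ extends to a function on a neighborhood in $\bbR^{n+1}$ (for instance, the $0$-homogeneous extension $\tilde f(x) = f(x/|x|)$), and the vector fields $X_{ij} = x_i\partial_j - x_j\partial_i$ are genuine vector fields on $\bbR^{n+1}$ that happen to be tangent to every sphere centered at the origin. Because they are tangent, their action on $\tilde f$ restricted to $\bbS^n$ depends only on $f$, so the right-hand side $\sum_{i<j} X_{ij}^2$ is a well-defined second-order operator on $\bbS^n$, and it suffices to compute it in the ambient coordinates.

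\medskip

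First I would expand $\sum_{i<j} X_{ij}^2$ as an ambient differential operator. Writing
\[
\sum_{1\le i<j\le n+1} (x_i\partial_j - x_j\partial_i)^2,
\]
I would multiply out each square, being careful that $x_i\partial_j$ and $\partial_j$ do not commute, and collect terms. The expected outcome is the classical formula relating the total angular momentum operator to the full Laplacian: the sum equals
\[
|x|^2 \Delta_E - \sum_{i,j} x_i x_j \partial_i\partial_j - (n-1)\sum_i x_i\partial_i,
\]
where $\Delta_E$ is the Euclidean Laplacian on $\bbR^{n+1}$. This is the standard decomposition of $\Delta_E$ into its radial and spherical parts. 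The second and third terms are purely radial: in terms of the radial derivative $\partial_r = \sum_i (x_i/|x|)\partial_i$, one has $\sum_{i,j} x_i x_j \partial_i\partial_j = r^2\partial_r^2$ and $\sum_i x_i\partial_i = r\partial_r$.

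\medskip

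Next, I would compare with the polar decomposition of the Euclidean Laplacian, namely
\[
\Delta_E = \partial_r^2 + \frac{n}{r}\partial_r + \frac{1}{r^2}\Delta_S,
\]
which identifies $\Delta_S$ as the angular part. Substituting this into the expression for $\sum_{i<j} X_{ij}^2$ and restricting to $r = |x| = 1$, the radial derivatives should cancel exactly, leaving $\Delta_S$. The main obstacle, and the place demanding the most care, is the bookkeeping of the non-commuting terms when squaring $X_{ij}$ and confirming that the radial contributions cancel with the correct coefficient $(n-1)$ rather than some neighboring constant; an off-by-one error in the dimension-dependent coefficient is the natural failure mode. I would therefore sanity-check the final coefficients against a low-dimensional case, say $n=1$ (the circle, with the single field $X_{12}=x_1\partial_2-x_2\partial_1=\partial_\theta$ and $\Delta_S=\partial_\theta^2$) and $n=2$, before asserting the general identity.
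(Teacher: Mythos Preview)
Your approach is correct and essentially the same as the paper's: expand $\sum_{i<j}X_{ij}^2$ in ambient Cartesian coordinates and compare with the polar decomposition of $\Delta_E$; the only cosmetic difference is that the paper applies the homogeneity condition $r\partial_r\tilde f=0$ early to kill the radial terms during the expansion, whereas you keep them and cancel them at the end as an operator identity. One small slip to fix: the first-order coefficient should be $n$, not $n-1$ (in $\bbR^{n+1}$ each $x_k\partial_k$ occurs in exactly $n$ of the $\binom{n+1}{2}$ squares), and your proposed $n=1$ sanity check would indeed catch this.
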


\begin{proof}
Let $f:\bbS^{n} \rightarrow \mathbb{R}$ be a function, and let $\tilde{f}\colon\mathbb{R}^{n+1}\setminus{\{0\}}\rightarrow\mathbb{R}$ be its dilation invariant extension to $\bbR^{n+1}\setminus{\{0\}}$. Then,
\begin{align}\label{Xij^2}
\sum_{i<j} X_{ij}^2 \tilde{f} & = \sum_{i<j} \left( x_i^2\partial_j^2 \tilde{f} + x_j^2\partial_i^2 \tilde{f} - 2x_i x_j \partial_i\partial_j \tilde{f}+x_i\partial_i\tilde{f}+x_j\partial_j\tilde{f} \right) \\
& = \sum_{i\neq j} \left( x_i^2 \partial_j^2 \tilde{f} - x_i x_j \partial_i\partial_j \tilde{f} \right)-n\sum_{i} x_i\partial_i\tilde{f}.\nonumber
\end{align}
Since $\tilde{f}$ is constant along the radial direction, 
\begin{equation}\label{constant-r-direction}
\sum_{i=1}^{n+1} x_i \partial_i \tilde{f} = r\partial_r\tilde{f}=0.
\end{equation}
Fixing $j$ and using \eqref{constant-r-direction}, we have
\begin{equation}\label{fixedj}
\sum_{\substack{i=1,\\ i\neq j}}^{n+1} x_i x_j \partial_i \partial_j \tilde{f} = x_j \partial_j \left(\sum_{\substack{i=1,\\ i\neq j}}^{n+1} x_i \partial_i \tilde{f} \right) = -\left(x_j\partial_j \right)^2 \tilde{f}.
\end{equation}
Summing \eqref{fixedj} over $1\leq j \leq n+1$  we obtain
\begin{equation}\label{sumj}
\sum_{i\neq j}x_i x_j \partial_i \partial_j  \tilde{f} = -\sum_{j=1}^{n+1}x_j^2\partial_j^2\tilde{f}.
\end{equation}
From~\eqref{Xij^2},~\eqref{constant-r-direction} and~\eqref{sumj} we get
$$\sum_{i<j} X_{ij}^2 \tilde{f}  = \sum_{i\neq j} x_i^2 \partial_j^2 \tilde{f} +\sum_{j=1}^{n+1} x_j^2 \partial_j^2 \tilde{f} = r^2\sum_{j=1}^{n+1} \partial_j^2 \tilde{f}=r^2 \Delta_E \tilde{f}. $$
It only remains to recall the formula $\Delta_E=\partial_r^2+\frac{n-1}{r}\partial_r+\frac{1}{r^2}\Delta_S,$ to get
\[
\sum_{i<j} X_{ij}^2 f = \Delta_S f.
\]
\end{proof}

\begin{proof} [Proof of Theorem \ref{thm:spheres}]
Let $h: B\left(\rho\right)\rightarrow\mathbb{R}$ be a harmonic function. Using Proposition~\ref{prop:spheres-sum-of-squares} we notice that
$$
\Delta_S |h|^2 = 2\Re(\bar{h}\Delta_S h) +  2\sum_{i<j}\left|X_{ij}h\right|^2 = 2\sum_{i<j}\left|X_{ij}h\right|^2. 
$$
Since $X_{ij}$ commutes with the spherical Laplacian this formula expresses $\Delta_S |h|^2$ as a sum of squares of harmonic functions. Therefore, by induction, we conclude that $\Delta_S^k |h|^2\geq 0$ for every $k\in\mathbb{N}.$
\end{proof}

\section{The Laplacian on normal homogeneous Riemannian manifolds}
\label{sec:normal-homogeneous}
\subsection{General Lie group notations}
Let $G$ be a semisimple, connected, compact Lie group. 
Let $K$ be a closed subgroup of $G$. 
Following Helgason's notation from \cite{helgason-book1}*{Ch. II},
for $X\in T_e G$ we denote by 
$\tX$ the unique left invariant vector field on $G$ such that
$\tX_e=X$. Similarly, $\bar{X}$ denotes the corresponding right invariant vector field. We consider $T_e G$ with the Lie algebra structure induced from the Lie brackets
of left invariant vector fields and denote it by~$\mfg$.
For every $X\in \mfg$ there exists a unique one parametric subgroup $\gamma_X(t)$
such that $\dot{\gamma}(0)=X$. This subgroup is denoted by $\exp tX$.

Let $\mfk$ be the Lie subalgebra of $\mfg$ corresponding to a closed subgroup~$K$.
Let $L_g,$ and $R_g$ denote the left and right translations, respectively, in $G$ by an element~$g$. Then 
$$\Ad:G\to \GL(\mfg)$$ is the adjoint representation of $G$,
where $$\Ad(g)=(d R_{g^{-1}})|_g (d L_{g})|_e.$$

The homogeneous manifold $M=G/K$ is the space of left $K$-cosets. The canonical projection map is $\pi: G\to M$.
The coset $K$ will be called the ``origin'' of $M$, and will be denoted by $o$.
The left action of $g\in G$ on $M$ is denoted by $\tau_g$. When convenient we write $go=\tau_g\cdot o=\pi(g)$.

  
Given $X\in\mfg$, we denote by $X^+$ \cite{helgason-book1}*{Ch. II, \S3} the vector field on~$M$ generating the flow $p\mapsto \tau_{\exp tX}\cdot p$, i.e.
$$X^{+}f(p):=\frac{d}{dt}\Big|_{t=0} f(\tau_{\exp tX}\cdot p)\ .$$
It is useful to note the following identities:
\begin{eqnarray}
  d\pi|_g (\bar{X})&=&X^+_{go}\ , \\
   d\tau_g|_{ho}\circ d\pi|_h  &=& d\pi|_{gh}\circ d L_g|_h\ ,\\
    \label{identity:tau-Ad-correspondence}
     d\tau_g (X^+) &=&(\Ad(g)X)^+\ ,\\
     \label{form:X+Y+commutator}
  [X^{+}, Y^{+}]&=&-[X, Y]^{+}\ .
\end{eqnarray}
For~\eqref{form:X+Y+commutator} see \cite{helgason-book1}*{Ch. II, Theorem 3.4}.

\subsection{A bi-invariant Riemannian metric on~$G$ and an induced Riemannian metric on~$G/K$}
Fix a positive definite $\Ad(G)$-invariant bilinear form on~$\mfg$,
$$B:\mfg\times\mfg\to\bbR$$
such that:
\begin{equation}
\label{cdn:ad-invariance}
    \forall X, Y, Z\in\mfg,\quad B([Z, X], Y)+B(X, [Z, Y])=0\ ,
\end{equation}
where the latter identity is equivalent (as $G$ is connected) to
$$ \forall g\in G\quad B(\Ad(g) X, \Ad(g) Y)=B(X, Y)\ . $$
By our assumptions, positive definite $\Ad(G)$-invariant forms on~$\mfg$ exist
(\cite{helgason-book1}*{Ch. II, Prop 6.6}).
The form~$B$ on~$\mfg$ induces a bi-invariant Riemannian metric on~$G$,
$$\langle V, W \rangle_g := B\left((d L_g)^{-1} V, (d L_g)^{-1} W\right).$$

To define a $G$-invariant Riemannian metric on $M$, let $\mfm=\mfk^{\perp}$ be the orthogonal complement of $\mfk$ in~$\mfg$ with respect
to the quadratic form~$B$.
Since $B$ is in particular $\mathrm{Ad(K)}$-invariant, the linear decomposition $\mfg=\mfk\oplus\mfm$
is likewise $\mathrm{Ad}(K)$-invariant. 
  Observe that $\ker(d \pi)|_e=\mfk$ and thus, $d \pi|_{e}:\mfm\to T_o M$ is an isomorphism.
 Every $X\in\mfg$ can therefore be uniquely decomposed as,
 $$X=X_{\mfm}+X_{\mfk},$$
 where $X_{\mfm}\in\mfm$ and $X_{\mfk}\in\mfk$.

For $V\in T_o M$, let $(d\pi)^{-1}(V)$ denote (with abuse of notation) the unique $X\in\mfm$
such that $d\pi|_e X=V$. Then, one can define a metric on~$M$ by
\begin{equation}
\label{definition:metric-on-cosets}
\langle V, W\rangle_{go} := B\left( (d\pi)^{-1}(d\tau_g)^{-1} V, (d\pi)^{-1}(d\tau_g)^{-1}W\right)\ .
\end{equation}

 Homogeneous Riemannian manifolds constructed in the manner above were studied by Nomizu~\cite{nomizu} and are called \emph{normal} homogeneous Riemannian manifolds following M.~Berger's terminology~\cite{berger}. 

\subsection{The Levi-Civita connection on a normal homogeneous Riemannian manifold}
We would like to understand the Laplace-Beltrami operator on~$M$. To that end, we first need
to compute the Levi-Civita connection on~$M$. On Lie groups with bi-invariant Riemannian structure this computation (as well as the more involved curvature computation) was first carried out by Cartan~\cite{cartan}*{p. 64}. Then, it was extended by Nomizu~\cite{nomizu} to naturally reductive homogeneous Riemannian manifolds
(in particular, normal homogeneous spaces). We give here our own variation on this classical calculation.

\begin{lemma}
\label{lem:leibnitz}
Let $M$ be a normal homogeneous Riemannian manifold as above. Then,
for all $X, Y, Z\in \mfg$ one has
$$Z^+\langle X^+, Y^+ \rangle =
\langle [Z^+, X^+], Y^+ \rangle + \langle X^+, [Z^+, Y^+] \rangle$$
\end{lemma}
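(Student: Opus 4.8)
The plan is to recognize the asserted formula as the Killing equation for the fundamental vector field $Z^+$, and to prove it by realizing $Z^+$ as the infinitesimal generator of a one-parameter group of isometries of $M$.

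First I would verify that for every $g\in G$ the map $\tau_g\colon M\to M$ is an isometry, so that the metric \eqref{definition:metric-on-cosets} is genuinely $G$-invariant. Given $V,W\in T_{ao}M$, one applies the definition at the point $\tau_g(ao)=(ga)o$ and uses $\tau_{ga}^{-1}\circ\tau_g=\tau_{a^{-1}}$, hence $(d\tau_{ga})^{-1}\circ d\tau_g=(d\tau_{a})^{-1}$, to obtain
$$\langle d\tau_g V, d\tau_g W\rangle_{(ga)o} = B\bigl((d\pi)^{-1}(d\tau_{a})^{-1}V,\ (d\pi)^{-1}(d\tau_{a})^{-1}W\bigr) = \langle V, W\rangle_{ao}.$$
This is a short bookkeeping computation tracking the base points of the differentials.

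Next, by the very definition of $X^+$, the field $Z^+$ generates the flow $\phi_t(p)=\tau_{\exp tZ}\cdot p$. Since each $\phi_t$ is an isometry by the previous step, we have $\phi_t^{*}\langle\cdot,\cdot\rangle=\langle\cdot,\cdot\rangle$ for all $t$; differentiating at $t=0$ shows that the Lie derivative of the metric along $Z^+$ vanishes, i.e. $Z^+$ is a Killing field. Expanding $\mathcal{L}_{Z^+}\langle\cdot,\cdot\rangle=0$ by the Leibniz rule, together with $\mathcal{L}_{Z^+}W=[Z^+,W]$ for vector fields $W$, gives
$$0 = Z^+\langle X^+, Y^+\rangle - \langle [Z^+, X^+], Y^+\rangle - \langle X^+, [Z^+, Y^+]\rangle,$$
which is precisely the claimed identity.

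I expect no deep obstacle here: the conceptual content is simply that $Z^+$ is Killing, and the only place demanding care is the base-point bookkeeping that establishes the $G$-invariance of the metric. If instead one prefers a computation internal to $\mfg$ (which dovetails with the subsequent Koszul-type calculation of the connection), the alternative is to express $\langle X^+, Y^+\rangle$ at $go$ through $\Ad(g^{-1})$ by means of \eqref{identity:tau-Ad-correspondence}, and then to differentiate this inner-product function along $Z^+$; the work then shifts to controlling that variation and invoking the $\ad$-invariance \eqref{cdn:ad-invariance} of $B$ together with \eqref{form:X+Y+commutator}. Either route is routine once the Killing interpretation is in hand.
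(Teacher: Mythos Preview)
Your main argument is correct and captures the conceptual content of the paper's proof: both rest on the fact that each $\tau_g$ is an isometry, so $Z^+$ is a Killing field and the Lie derivative of the metric along $Z^+$ vanishes. The paper, however, does not invoke the Killing/Lie-derivative formalism; it verifies the identity only at the origin by writing $\langle X^+,Y^+\rangle_{(\exp tZ)o}=\langle(\Ad(\exp(-tZ))X)^+,(\Ad(\exp(-tZ))Y)^+\rangle_o$ via \eqref{identity:tau-Ad-correspondence} and differentiating at $t=0$, then applying \eqref{form:X+Y+commutator}. This is precisely the ``alternative'' you sketch at the end, with one correction: that computation does \emph{not} require the $\ad$-invariance \eqref{cdn:ad-invariance} of $B$, contrary to what you suggest (the paper's Remark after the lemma stresses exactly this). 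The derivative of $t\mapsto(\Ad(\exp(-tZ))X)^+_o$ is $-[Z,X]^+_o$, and since $\langle\cdot,\cdot\rangle_o$ is a fixed bilinear form the product rule alone finishes the job; $\ad$-invariance only enters later, in Proposition~\ref{prop:nat-reductive-geodesics}.
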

\remark. This formula does not make use of the $\Ad(G)$-invariance of~$B$ nor the positivity of~$B$ on~$\mfg$. It is valid
on any  homogeneous Riemannian manifold.
\begin{proof}
We confirm the validity of the formula only at the origin as this is all we use below.
\begin{equation*}
\begin{split}
 Z^+_{o}\langle X^+, Y^+ \rangle &=\frac{d}{dt}\Big|_{t=0} \langle (d\tau_{\exp tZ})^{-1} X^+_{(\exp tZ)o}, (d\tau_{\exp tZ})^{-1} Y^+_{(\exp tZ) o}\rangle_o
 \\
&\stackrel{\eqref{identity:tau-Ad-correspondence}}{=}\frac{d}{dt}\Big|_{t=0}\langle (\Ad \exp(-tZ)X)^+, (\Ad \exp(-tZ)Y)^+\rangle_o\\
&=-\langle [Z, X]^+, Y^+ \rangle_o - \langle X^+, [Z, Y]^+ \rangle_o\\
 &\stackrel{\eqref{form:X+Y+commutator}}{=}\langle [Z^+, X^+], Y^+ \rangle_o + \langle X^+, [Z^+, Y^+] \rangle_o
 \end{split}
\end{equation*}
\end{proof}
\begin{lemma}
\label{lem:koszul-reduced}
Let $M$ be a normal homogeneous Riemannian manifold as above.
Then, for all $X, Y, Z\in \mfg$ one has
$$2\langle\nabla_{X^+} Y^+, Z^+\rangle = \langle X^+, [Y^+, Z^+] \rangle - \langle Y^+, [Z^+, X^+] \rangle+
\langle Z^+, [X^+, Y^+]\rangle\ . $$
\end{lemma}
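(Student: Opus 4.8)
The plan is to derive the stated identity from the classical Koszul formula, using Lemma~\ref{lem:leibnitz} to eliminate precisely the terms in which the metric itself is differentiated. Recall that for the Levi-Civita connection of any Riemannian metric and any smooth vector fields $A,B,C$ one has Koszul's formula
$$2\langle\nabla_A B, C\rangle = A\langle B,C\rangle + B\langle C,A\rangle - C\langle A,B\rangle + \langle [A,B],C\rangle - \langle [B,C],A\rangle + \langle [C,A],B\rangle,$$
which is a purely formal consequence of metric compatibility and the vanishing of torsion and requires no special structure on $M$. First I would apply this identity to $A=X^+$, $B=Y^+$, $C=Z^+$. This produces three ``derivative'' terms, namely $X^+\langle Y^+,Z^+\rangle$, $Y^+\langle Z^+,X^+\rangle$ and $Z^+\langle X^+,Y^+\rangle$, alongside the three bracket terms already present on the right-hand side.

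The key step is then to rewrite each of the three derivative terms by means of Lemma~\ref{lem:leibnitz}, e.g.
$$X^+\langle Y^+,Z^+\rangle = \langle [X^+,Y^+],Z^+\rangle + \langle Y^+,[X^+,Z^+]\rangle,$$
and analogously for the cyclic permutations. After this substitution every summand on the right-hand side is an inner product of a single bracket with a third field, so Koszul's formula becomes a sum of nine such terms.

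Finally I would invoke the symmetry of $\langle\cdot,\cdot\rangle$ together with the antisymmetry of the bracket to cancel six of these nine terms in pairs; the three survivors are exactly $\langle X^+,[Y^+,Z^+]\rangle - \langle Y^+,[Z^+,X^+]\rangle + \langle Z^+,[X^+,Y^+]\rangle$, which is the claim. Conceptually, Lemma~\ref{lem:leibnitz} is doing the work of metric compatibility, letting one trade all metric-derivative terms for brackets; I expect no genuine obstacle beyond careful sign bookkeeping in this last cancellation. One small point worth flagging: Lemma~\ref{lem:leibnitz} was verified only at the origin $o$, so the argument as above establishes the identity pointwise at $o$. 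This is enough, since $M$ is $G$-homogeneous and both the metric and the fields $X^+$ transform equivariantly under the maps $\tau_g$; alternatively one notes that Koszul's formula and Lemma~\ref{lem:leibnitz} are genuine identities of functions, so the same manipulation holds at every point.
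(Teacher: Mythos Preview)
Your proposal is correct and follows essentially the same route as the paper: apply Koszul's formula to $X^+,Y^+,Z^+$, replace the three metric-derivative terms via Lemma~\ref{lem:leibnitz}, and collect the resulting bracket terms. Your remark about Lemma~\ref{lem:leibnitz} having been checked only at $o$ is apt and matches the paper's own caveat; since the identity to be proved is $G$-equivariant, verification at the origin suffices.
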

\remark. This formula is valid on any homogeneous Riemannian manifold.
\begin{proof}
By Koszul's formula  we have
\begin{equation*}
\begin{split}
  2\langle\nabla_{X^+} Y^+, Z^+\rangle&= X^+\langle Y^+, Z^+\rangle + Y^+\langle Z^+, X^+ \rangle -Z^+\langle X^+, Y^+ \rangle \\&
- \langle X^+, [Y^+, Z^+] \rangle + \langle Y^+, [Z^+, X^+] \rangle +\langle Z^+, [X^+, Y^+] \rangle\  .
\end{split}
\end{equation*}
We now substitute for the first three terms the expressions given by Lemma~\ref{lem:leibnitz}
and collect terms to obtain the claimed formula.
\end{proof}
\begin{lemma}
\label{lem:BonM-interpretation}
Let $M$ be a  normal homogeneous Riemannian manifold as above. Then, for all $X, Y, Z\in\mfg$
$$\langle X^+, [Y^+, Z^+] \rangle_o = -B(X_{\mfm}, [Y , Z]_\mfm)\ .$$
More generally,
$$\langle [X^+, Y^+], Z^+ \rangle_{go} = -B((\Ad(g^{-1})[X, Y])_\mfm, (\Ad(g^{-1})Z)_\mfm)\ .$$
\end{lemma}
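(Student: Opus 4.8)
The plan is to prove a single identity at the origin and then transport everything there using $G$-invariance. The key building block is
$$\langle A^+, C^+\rangle_o = B(A_\mfm, C_\mfm)\qquad\text{for all } A, C\in\mfg.$$
To establish it, I would evaluate the identity $d\pi|_g(\bar A)=A^+_{go}$ at $g=e$: since the right-invariant field $\bar A$ satisfies $\bar A_e=A$, this gives $A^+_o=d\pi|_e(A)$. Because $\ker(d\pi|_e)=\mfk$ and $d\pi|_e$ restricts to an isomorphism $\mfm\to T_oM$, we get $A^+_o=d\pi|_e(A_\mfm)$ and hence $(d\pi)^{-1}(A^+_o)=A_\mfm$. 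Substituting into the definition \eqref{definition:metric-on-cosets} of the metric at $o$ (where $g=e$, so $d\tau_g$ is the identity) yields the building block.

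With this in hand, the first formula is immediate: by \eqref{form:X+Y+commutator} one has $[Y^+,Z^+]=-[Y,Z]^+$, so
$$\langle X^+,[Y^+,Z^+]\rangle_o=-\langle X^+,[Y,Z]^+\rangle_o=-B(X_\mfm,[Y,Z]_\mfm),$$
the last equality being the building block applied to $A=X$ and $C=[Y,Z]$.

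For the general formula I would use that each $\tau_g$ is a Riemannian isometry of $M$; this follows directly from \eqref{definition:metric-on-cosets} together with $d\tau_{gh}=d\tau_g\circ d\tau_h$ (and it is precisely the $\Ad(K)$-invariance of $B$ that makes \eqref{definition:metric-on-cosets} well defined). Applying the isometry $\tau_{g^{-1}}$, which sends $go$ to $o$, gives
$$\langle[X^+,Y^+],Z^+\rangle_{go}=\langle d\tau_{g^{-1}}[X^+,Y^+],\,d\tau_{g^{-1}}Z^+\rangle_o.$$
Now push the fields forward: $[X^+,Y^+]=-[X,Y]^+$ by \eqref{form:X+Y+commutator}, so by \eqref{identity:tau-Ad-correspondence} together with linearity of the pushforward, $d\tau_{g^{-1}}[X^+,Y^+]=-(\Ad(g^{-1})[X,Y])^+$ and $d\tau_{g^{-1}}Z^+=(\Ad(g^{-1})Z)^+$. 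Feeding these into the building block with $A=\Ad(g^{-1})[X,Y]$ and $C=\Ad(g^{-1})Z$ produces exactly $-B((\Ad(g^{-1})[X,Y])_\mfm,(\Ad(g^{-1})Z)_\mfm)$, as claimed.

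The only genuinely delicate step is the building block, specifically the appearance of the $\mfm$-projection: one must keep careful track of the fact that $d\pi|_e$ annihilates $\mfk$, so that passing from $A$ to the vector $A^+_o$ and back through $(d\pi)^{-1}$ automatically replaces $A$ by $A_\mfm$. A secondary point requiring care is the bookkeeping of base points when pushing vector fields forward by $d\tau_{g^{-1}}$ in the general case; once the invariance of the metric is granted, the remainder is a direct substitution.
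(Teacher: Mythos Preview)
Your proof is correct and follows exactly the approach the paper indicates: unwind the definition~\eqref{definition:metric-on-cosets} of the metric at the origin to obtain $\langle A^+,C^+\rangle_o=B(A_\mfm,C_\mfm)$, then combine with~\eqref{form:X+Y+commutator}. The paper's own proof is a one-line reference to these two ingredients and explicitly declines to verify the formula at a general point~$go$; your derivation of that case via the isometry $\tau_{g^{-1}}$ and~\eqref{identity:tau-Ad-correspondence} is the natural completion and is also correct.
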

\remark. This formula is valid on any homogeneous Riemannian manifold.
\begin{proof}
We only explain the formula at the origin, as this is all we  use below.
At the origin the formula follows immediately from~\eqref{form:X+Y+commutator}  and the definition of the metric on $M$ in~\eqref{definition:metric-on-cosets}.
\end{proof}

%
%

We are now able to compute the Riemannian connection:
\begin{proposition}[\cite{nomizu}*{Theorem 10.1}]
\label{prop:nat-reductive-geodesics}
Let $M$ be a normal homogeneous Riemannian manifold. Then,
$$ \forall X, Y, Z\in\mfm\quad \nabla_{X^+_o} Y^+ =\frac{1}{2}[X^+, Y^+]_o\ .$$
In particular, $$ \forall X\in\mfm\quad \nabla_{X^+_o} X^+ = 0\ .$$
\end{proposition}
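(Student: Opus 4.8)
The plan is to specialize the general Koszul-type identity from Lemma~\ref{lem:koszul-reduced} to vectors lying in~$\mfm$ and to exploit the $\Ad(G)$-invariance of~$B$, which forces the bracket structure to be skew in a way that cancels two of the three terms. First I would observe that for any test vector $Z\in\mfg$ it suffices, by the nondegeneracy of the metric $\langle\cdot,\cdot\rangle_o$ together with the fact that $d\pi|_e:\mfm\to T_oM$ is an isomorphism, to verify that $\langle\nabla_{X^+_o}Y^+,Z^+\rangle_o=\tfrac12\langle[X^+,Y^+]_o,Z^+\rangle_o$ for all $Z\in\mfm$. I would then apply Lemma~\ref{lem:koszul-reduced} to rewrite $2\langle\nabla_{X^+}Y^+,Z^+\rangle_o$ as the alternating sum of three bracket-pairings, and rewrite each pairing at the origin using Lemma~\ref{lem:BonM-interpretation}, converting everything into expressions of the form $-B((\,\cdot\,)_\mfm,(\,\cdot\,)_\mfm)$.

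The key step is the cancellation. After substitution via Lemma~\ref{lem:BonM-interpretation}, the three terms become (up to sign) $B(X_\mfm,[Y,Z]_\mfm)$, $B(Y_\mfm,[Z,X]_\mfm)$, and $B(Z_\mfm,[X,Y]_\mfm)$, with $X,Y,Z\in\mfm$ so that $X_\mfm=X$, etc. The point is that for the first two terms I would drop the projection onto $\mfm$: since $Z\in\mfm$ and $\mfk\perp\mfm$, one has $B(X,[Y,Z]_\mfm)=B(X,[Y,Z])$ and similarly for the second, so these become $B(X,[Y,Z])$ and $B(Y,[Z,X])$. The ad-invariance condition~\eqref{cdn:ad-invariance}, namely $B([Z,X],Y)+B(X,[Z,Y])=0$, applied with the cyclic relabeling, shows that $B(X,[Y,Z])=-B([Y,X],Z)=B([X,Y],Z)$ and that $B(Y,[Z,X])=-B([Z,Y],X)=-B(X,[Z,Y])$, and a short manipulation reveals that the first two contributions cancel each other, leaving only the third term, which gives $\tfrac12[X^+,Y^+]_o$.

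I expect the main obstacle to be bookkeeping the projections and signs correctly rather than any conceptual difficulty: one must be careful that Lemma~\ref{lem:BonM-interpretation} produces the $\mfm$-projection of the bracket, and that for the two terms where the ``outer'' vector already lies in~$\mfm$ the projection is harmless, so that~\eqref{cdn:ad-invariance} can be invoked on the full Lie algebra rather than on the projected brackets. The final clause, $\nabla_{X^+_o}X^+=0$ for $X\in\mfm$, is then immediate: setting $Y=X$ gives $\nabla_{X^+_o}X^+=\tfrac12[X^+,X^+]_o=0$, and geometrically this says that the curves $t\mapsto\tau_{\exp tX}\cdot o$ are geodesics through the origin, recovering the characterization of normal homogeneous spaces as naturally reductive.
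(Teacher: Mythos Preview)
Your approach is correct and coincides with the paper's: both apply Lemma~\ref{lem:koszul-reduced} and Lemma~\ref{lem:BonM-interpretation} at the origin and then use the $\Ad$-invariance of~$B$ to cancel the first two bracket terms, the paper packaging this cancellation as the ``natural reductivity condition''~\eqref{cdn:naturally-reductive}, which is exactly your observation that the $\mfm$-projection on the bracket may be dropped when the outer argument lies in~$\mfm$. One small slip: in your sentence ``since $Z\in\mfm$ and $\mfk\perp\mfm$, one has $B(X,[Y,Z]_\mfm)=B(X,[Y,Z])$'' the relevant hypothesis is that the \emph{outer} vector $X$ lies in~$\mfm$, not~$Z$; your later paragraph shows you understand this, so just correct the phrasing.
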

\remark. The lemmas holds true on any naturally reductive homogeneous Riemannian manifold. 
 The curve $ge^{tX}o$ is a geodesic through $go$~\citelist{\cite{nomizu}\cite{oneill}*{ch. 11, prop. 25}}, but we will not use this fact.
 \begin{proof}
From Lemmas~\ref{lem:koszul-reduced} and~\ref{lem:BonM-interpretation} we know
$$2\langle \nabla_{X^+} Y^+, Z^+\rangle_o = -B(X, [Y, Z]_{\mfm})+B(Y, [Z, X]_{\mfm})-B(Z, [X, Y]_{\mfm})$$
A simple observation shows that condition~\eqref{cdn:ad-invariance} implies the
natural reductivity condition, namely,
\begin{equation}
    \label{cdn:naturally-reductive}
  \forall X, Y, Z\in\mfm\quad B([Z, X]_{\mfm}, Y)+B(X, [Z, Y]_{\mfm})=0\ .
\end{equation}
Thus, $$2\langle \nabla_{X^+} Y^+, Z^+\rangle_o = -B([X, Y]_{\mfm}, Z) = \langle [X^+, Y^+], Z^+ \rangle_o\ .$$
 \end{proof}

\subsection{The projected Casimir operator is $G$-invariant}

We denote by $U_{\mfg}$ the universal enveloping algebra of~$G$.
One then defines the Casimir element $\Omega\in U_\mfg$ corresponding to our fixed $\Ad(G)$-invariant positive definite form~$B$ as
$$\Omega=\sum_{j=1}^d  X_j^* X_j $$
where $(X_j)_{j=1}^d$ is any basis of $\mfg$ and $X_j^*$ is the dual basis with respect~$B$. One can verify that the element $\Omega$ is independent of the choice of basis~$X_j$ and, very importantly, that the $\Ad$-invariance of~$B$ implies~(see e.g.~\cite{bump-lie groups}*{Prop. 10.3} or~\cite{jacobson-lie algebras}*{Ch. III, \S7})
 that 
$$\Omega\in Z(U_{\mfg})\ ,$$
where $Z(U_{\mfg})$ denotes the center of $U_\mfg$.


Consider the left invariant differential operator on $G$ corresponding to the Casimir element given by
 $$\tilde{\Omega}:=\sum_{j=1}^{n}  \tilde{X}^*_j\tX_j$$
 
 \begin{proposition}
    \label{prop:casimir-bi-invariant}
   The Casimir operator $\tilde{\Omega}$ is  both left and right invariant (i.e.
   it commutes with left and right translations). In other words,
   $$\bar{\Omega}=\tilde{\Omega}\ ,$$
    where $\bar{\Omega}=\sum_{j=1}^{n} \bar{X}^*_j \bar{X}_j$.
 \end{proposition}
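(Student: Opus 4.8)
The plan is to reduce the whole statement to one point: that the left-invariant operator $\tilde\Omega$ is \emph{also} right-invariant. Once this is known, the identity $\bar\Omega=\tilde\Omega$ — and hence the bi-invariance of both — will follow by comparing the two operators at the identity. Since $\Omega$ does not depend on the chosen basis, I would first fix a basis $(X_j)$ of $\mfg$ that is orthonormal with respect to $B$; then $X_j^*=X_j$, so $\Omega=\sum_j X_j^2$, $\tilde\Omega=\sum_j \tX_j^2$ and $\bar\Omega=\sum_j \bX_j^2$. The left-invariance of $\tilde\Omega$ is immediate, as it is assembled from left-invariant fields, and likewise $\bar\Omega$ is right-invariant by construction.

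The engine of the proof is the way right translation conjugates a left-invariant field. Writing $\rho(g)f:=f\circ R_g$, a direct computation using $g(\exp tX)g^{-1}=\exp(t\,\Ad(g)X)$ gives
$$\rho(g)\circ \tX\circ \rho(g)^{-1}=\widetilde{\Ad(g)X},$$
and this conjugation respects composition of operators, so it sends $\tX_j^2$ to $\widetilde{\Ad(g)X_j}^2$. Because $B$ is $\Ad(G)$-invariant, $\Ad(g)$ is an orthogonal transformation of $(\mfg,B)$, so $(\Ad(g)X_j)_j$ is again an orthonormal basis; and since $\Omega$ is independent of the orthonormal basis used to define it, $\sum_j \widetilde{\Ad(g)X_j}^2=\sum_j \tX_j^2=\tilde\Omega$. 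Hence $\rho(g)\tilde\Omega\rho(g)^{-1}=\tilde\Omega$ for every $g$, which is exactly the assertion that $\tilde\Omega$ commutes with all right translations. This is the concrete form of the passage from the centrality $\Omega\in Z(U_\mfg)$ — equivalently, $\Ad(G)$-invariance of $\Omega$ for connected $G$ — to right-invariance of the associated operator.

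It remains to identify $\tilde\Omega$ with $\bar\Omega$. Both are now right-invariant — $\bar\Omega$ by construction and $\tilde\Omega$ by the previous step — and a right-invariant differential operator is determined by the functional $f\mapsto (Df)(e)$, so the two coincide everywhere as soon as they agree at $e$. But for each $j$ one has $(\tX_j^2 f)(e)=\frac{d^2}{dt^2}\big|_{t=0} f(\exp tX_j)=(\bX_j^2 f)(e)$, since both reduce to the second derivative along the one-parameter subgroup $\exp tX_j$; summing over $j$ gives $(\tilde\Omega f)(e)=(\bar\Omega f)(e)$. Therefore $\bar\Omega=\tilde\Omega$, and this common operator, being left-invariant as $\tilde\Omega$ and right-invariant as $\bar\Omega$, is bi-invariant.

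The main obstacle I anticipate is bookkeeping the left/right conventions correctly in the conjugation identity $\rho(g)\tX\rho(g)^{-1}=\widetilde{\Ad(g)X}$ and checking that this conjugation is an algebra homomorphism. The only genuinely structural input is the orthogonality of $\Ad(g)$, i.e. the $\Ad(G)$-invariance of $B$, which is what forces the change of orthonormal basis to leave $\Omega$ fixed. Had I instead used an arbitrary basis, I would need the symmetry of the Casimir tensor $\sum_j X_j^*\otimes X_j$ to guarantee that the first-order term $\sum_j[X_j^*,X_j]$, which a priori could obstruct agreement at $e$, vanishes; the orthonormal choice circumvents this, since then each summand $\tX_j^2$ and $\bX_j^2$ already agrees at $e$ separately.
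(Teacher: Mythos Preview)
Your proof is correct. The paper's own argument is much terser and takes a slightly different route: it invokes the general fact (citing Helgason) that a left-invariant differential operator on a connected Lie group is right-invariant if and only if it commutes with every left-invariant vector field $\tX$, and then observes that since $\Omega\in Z(U_\mfg)$, the operator $\tilde\Omega$ commutes with all $\tX$, hence is right-invariant. The identity $\bar\Omega=\tilde\Omega$ is simply asserted as a restatement of bi-invariance, without the explicit verification at $e$ that you carry out.

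Your approach trades that cited lemma for a direct group-level computation: you conjugate $\tilde\Omega$ by right translation and use the orthogonality of $\Ad(g)$ with respect to $B$ to see that the operator is unchanged. This is more self-contained and makes the role of the $\Ad$-invariance of $B$ fully explicit, whereas the paper packages that same input into the centrality of $\Omega$ in $U_\mfg$ and outsources the passage to right-invariance to the literature. Both arguments are two sides of the same coin --- yours at the group level, the paper's at the infinitesimal/universal-enveloping level --- but yours has the advantage of also spelling out why $\bar\Omega$ and $\tilde\Omega$ actually coincide as operators, including the observation that the choice of orthonormal basis makes each summand $\tX_j^2$ and $\bX_j^2$ agree at $e$ separately.
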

\begin{proof}
A left invariant differential operator on a connected Lie group~$G$ is right invariant if and only if it commutes with all
left invariant vector fields $\tX$ on $G$ (see~\cite{helgason-book2}*{Ch. II, Lemma 4.4}).
Since the Casimir element is in the center of $U_\mfg$
it follows that $\tilde{\Omega}$ is right invariant.
\end{proof}

The next theorem makes use of the full structure of the normal homogeneous Riemannian  manifold $M$. In particular, it is not enough to assume that $M$ is naturally reductive.

\begin{theorem}
\label{thm:casimir-projection}
Let $M$ be a normal homogeneous Riemannian manifold, and let $(X_j)_{j=1}^{n}$ be a basis for~$\mfg$ as above. Then, the projected Casimir operator on~$M$,
$$ \Omega^+:=\sum_{j=1}^{n} (X^*_j)^+ X^+_j\ ,$$
is $G$-invariant. Moreover, $X^+ \Omega^+ = \Omega^+ X^+$ for all $X\in\mfm$.
\end{theorem}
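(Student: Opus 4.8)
The plan is to realize $\Omega^+$ as the pushforward under $\pi$ of the bi-invariant Casimir operator $\bar\Omega=\tilde\Omega$ on $G$, and then to deduce both assertions from properties of $\bar\Omega$ established above: its left-invariance will give the $G$-invariance of $\Omega^+$, and its commuting with the fields $\bar X$ (a consequence of $\Omega\in Z(U_\mfg)$) will give the commutation statement.

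First I would record the $\pi$-relatedness at the level of first-order operators. The identity $d\pi|_g(\bar X)=X^+_{go}$ says precisely that the right-invariant field $\bar X$ on $G$ is $\pi$-related to $X^+$ on $M$, i.e. $\bar X(f\circ\pi)=(X^+f)\circ\pi$ for every $f$ on $M$. Applying this twice, $\bar X_j^{*}\bar X_j(f\circ\pi)=((X_j^{*})^+X_j^+f)\circ\pi$, and summing over $j$ gives the key relation
\begin{equation*}
\bar\Omega(f\circ\pi)=(\Omega^+f)\circ\pi .
\end{equation*}
Thus $\Omega^+$ is exactly the operator through which $\bar\Omega$ descends to $M$; the right-hand side is automatically of the form (function on $M$)$\,\circ\,\pi$, so no separate check that $\bar\Omega$ preserves right-$K$-invariant functions is needed.

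For the first assertion I would exploit that $\bar\Omega=\tilde\Omega$ by Proposition~\ref{prop:casimir-bi-invariant}, so $\bar\Omega$ is left-invariant, commuting with every $L_g^{*}$. Combining this with the intertwining $\pi\circ L_g=\tau_g\circ\pi$ and the displayed relation, a short chain of substitutions yields $\big(\Omega^+(f\circ\tau_g)\big)\circ\pi=\big((\Omega^+f)\circ\tau_g\big)\circ\pi$; since $\pi$ is surjective this gives $\Omega^+(f\circ\tau_g)=(\Omega^+f)\circ\tau_g$, which is the $G$-invariance of $\Omega^+$. For the ``moreover'' I would simply differentiate this invariance along the flow $g=\exp tX$ at $t=0$: since $\tfrac{d}{dt}\big|_{0}T_{\exp tX}=X^+$ where $T_gf=f\circ\tau_g$, differentiating $\Omega^+T_g=T_g\Omega^+$ produces $\Omega^+X^+=X^+\Omega^+$ for every $X\in\mfg$, in particular for $X\in\mfm$. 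Equivalently one can push forward the identity $[\bar\Omega,\bar X]=0$, valid since $\Omega\in Z(U_\mfg)$, obtaining $[\Omega^+,X^+]=0$ directly.

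The step I expect to require the most care is the passage from $G$ to $M$: justifying that composing $\pi$-related first-order operators yields $\pi$-related second-order operators and that the descent is unambiguous. Once this is clean, everything else is formal. It is worth flagging that the argument in fact proves commutation with $X^+$ for \emph{all} $X\in\mfg$, which is stronger than the stated conclusion for $X\in\mfm$; the restriction to $\mfm$ is only what is used later. As an alternative route to $G$-invariance avoiding pushforwards, I could conjugate directly: from \eqref{identity:tau-Ad-correspondence} together with $g\exp(tX)g^{-1}=\exp(t\,\Ad(g)X)$ one gets $T_g^{-1}X^+T_g=(\Ad(g)X)^+$, whence $T_g^{-1}\Omega^+T_g=\sum_j(\Ad(g)X_j^{*})^+(\Ad(g)X_j)^+$; because $B$ is $\Ad(G)$-invariant by~\eqref{cdn:ad-invariance}, the set $\{\Ad(g)X_j^{*}\}$ is the $B$-dual of the basis $\{\Ad(g)X_j\}$, so the Casimir tensor $\sum_jX_j^{*}\otimes X_j$ is unchanged and the sum collapses back to $\Omega^+$.
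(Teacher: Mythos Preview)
Your proposal is correct and follows essentially the same route as the paper: establish the key descent relation $(\Omega^+f)\circ\pi=\bar\Omega(f\circ\pi)=\tilde\Omega(f\circ\pi)$ via $\pi$-relatedness of $\bar X$ and $X^+$, then use left-invariance of $\tilde\Omega$ together with $\pi\circ L_g=\tau_g\circ\pi$ for the $G$-invariance. For the commutation the paper computes directly $(X^+\Omega^+f)\circ\pi=\bar X\tilde\Omega(f\circ\pi)=\tilde\Omega\bar X(f\circ\pi)=(\Omega^+X^+f)\circ\pi$, whereas you obtain it by differentiating the already-proved $G$-invariance along $t\mapsto\exp tX$; both are immediate, and your observation that the conclusion holds for all $X\in\mfg$ (not just $\mfm$) is correct and applies equally to the paper's argument.
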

\remark. In this theorem, we could take the Casimir operator corresponding to any bi-invariant Riemannian metric on~$G$, not necessarily
the one corresponding to our fixed background Riemannian metric on~$M$.
\begin{proof}
Observe that  
$$ (\Omega^+ f)\circ\pi  = \bar{\Omega} (f\circ\pi) =\tilde{\Omega} (f\circ\pi) \ .$$
Hence, 
\begin{equation*}
\begin{split}
(\Omega^+ f)\circ \tau_g \circ\pi &= (\Omega^+f)\circ\pi\circ L_g = \tilde{\Omega} (f\circ\pi)\circ L_g\\&=\tilde{\Omega}(f\circ\pi\circ L_g)=\tilde{\Omega}(f\circ\tau_g\circ\pi)=\Omega^+(f\circ\tau_g)\circ\pi\ ,
\end{split}
\end{equation*}
implying that $(\Omega^+f)\circ\tau_g=\Omega^+(f\circ\tau_g)$.
Also, for $X\in \mfm$
$$ (X^+ \Omega^+ f)\circ\pi =\bar{X}\bar{\Omega} (f\circ\pi) = \bar{X}{\tilde{\Omega} (f\circ\pi})=\tilde{\Omega}\bar{X}(f\circ\pi)=\bar{\Omega}\bar{X} (f\circ\pi)
=(\Omega^+ X^{+} f)\circ\pi\ .$$
\end{proof}
\subsection{The Laplace-Beltrami operator vs. the Casimir operator}
In this section we identify, roughly speaking, the Laplace-Beltrami operator with the Casimir operator.
In the context of \emph{rank one} symmetric spaces this statement can be found in~\cite{faraut}*{p. 378},
and in the context of Riemannian symmetric spaces of the \emph{noncompact} type it can be found in~\cite{helgason-book2}*{Ch. II, exercise A.4}. 
The argument for Riemannian symmetric spaces of the \emph{compact} type (or normal homogeneous \emph{compact} Riemannian manifolds) is essentially the same and we  include it here for completeness.
\begin{theorem}
\label{thm:laplace-sum-of-squares}
Let $M$ be a normal homogeneous compact Riemannian manifold as above. Let $\Delta_M$ be the Laplace-Beltrami
operator on~$M$ and $\Omega^+$ be the projected Casimir operator as in Theorem~\ref{thm:casimir-projection}.
Then, 
\begin{equation} 
\label{eqn:lap=cas}
  \Delta_M =\Omega^+\ .
\end{equation}
In particular, the operator $\Delta_M$ can be written as a sum of squares of vector fields on $M$ which commute with $\Delta_M$.
\end{theorem}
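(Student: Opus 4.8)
The plan is to exploit that both operators are $G$-invariant and thereby reduce the identity $\Delta_M=\Omega^+$, applied to an arbitrary smooth $f$, to a computation at the single point $o$. The operator $\Delta_M$ commutes with each $\tau_g$ because the metric \eqref{definition:metric-on-cosets} is built so that the $\tau_g$ are isometries, and the Laplace--Beltrami operator is natural under isometries; that is, $(\Delta_M f)\circ\tau_g=\Delta_M(f\circ\tau_g)$. The same relation holds for $\Omega^+$ by Theorem~\ref{thm:casimir-projection}. Since $G$ acts transitively on $M$, an equality $(\Delta_M f)(o)=(\Omega^+ f)(o)$ valid for all $f$ upgrades to $\Delta_M=\Omega^+$ on all of $M$. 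So everything localizes to the origin.

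To evaluate at $o$ I would pick a basis of $\mfg$ adapted to $\mfg=\mfk\oplus\mfm$: a $B$-orthonormal basis $\{X_i\}$ of $\mfm$ and a $B$-orthonormal basis $\{Y_a\}$ of $\mfk$, whose union is a $B$-orthonormal basis of $\mfg$ equal to its own dual basis. Then $\Omega^+=\sum_i (X_i^+)^2+\sum_a (Y_a^+)^2$. The first observation is that $(Y_a^+)_o=0$, since the flow $t\mapsto\tau_{\exp tY_a}\cdot o$ is stationary because $\exp tY_a\in K$ fixes $o$. As a vector field vanishing at a point annihilates every function there, $((Y_a^+)^2 f)(o)=(Y_a^+)_o(Y_a^+ f)=0$, so the $\mfk$-terms drop out and $(\Omega^+ f)(o)=\sum_i ((X_i^+)^2 f)(o)$.

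On the Laplacian side, $d\pi|_e$ carries $X\in\mfm$ to $X^+_o$ and is, by \eqref{definition:metric-on-cosets}, an isometry onto $T_oM$, so $\{(X_i^+)_o\}$ is an orthonormal basis of $T_oM$. Writing $\Delta_M f$ as the metric trace of the Hessian $\operatorname{Hess}f(U,V)=U(Vf)-(\nabla_U V)f$ and evaluating that trace against $\{(X_i^+)_o\}$ gives $(\Delta_M f)(o)=\sum_i\big((X_i^+)^2 f-(\nabla_{X_i^+}X_i^+) f\big)(o)$. Here Proposition~\ref{prop:nat-reductive-geodesics} is decisive: for $X_i\in\mfm$ one has $\nabla_{(X_i^+)_o}X_i^+=0$, so the first-order correction terms vanish and $(\Delta_M f)(o)=\sum_i((X_i^+)^2 f)(o)=(\Omega^+ f)(o)$, as needed.

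The step requiring care, which I expect to be the main technical obstacle, is that $\{X_i^+\}$ is orthonormal only at $o$ and is not a local orthonormal frame, so one cannot naively feed it into the frame formula $\Delta_M f=\sum_i(E_iE_i f-(\nabla_{E_i}E_i)f)$. The clean route is to take $\Delta_M f$ to be the metric trace of the Hessian bilinear form and to recall that this trace may be computed against any basis orthonormal at the point in question; this confines the computation to $o$ and makes the vanishing $\nabla_{(X_i^+)_o}X_i^+=0$ precisely the input one needs. For the final assertion, choosing the $X_j^*$ to be the $B$-orthonormal dual basis writes $\Delta_M=\Omega^+=\sum_j (X_j^+)^2$ as a sum of $\dim G$ squares, and each $X_j^+$ commutes with $\Delta_M$: differentiating the $G$-invariance relation $\Omega^+(f\circ\tau_g)=(\Omega^+ f)\circ\tau_g$ at $g=\exp tX_j$ and $t=0$ yields $[X_j^+,\Omega^+]=0$ for every $X_j\in\mfg$.
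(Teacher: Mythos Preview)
Your proposal is correct and follows essentially the same route as the paper: reduce to the origin by $G$-invariance of both operators, pick a $B$-orthonormal basis adapted to $\mfg=\mfm\oplus\mfk$, kill the $\mfk$-terms at $o$ via $(Y_a^+)_o=0$, and compare with $\Delta_M$ at $o$ using $\nabla_{(X_i^+)_o}X_i^+=0$ from Proposition~\ref{prop:nat-reductive-geodesics}. Your treatment is in fact slightly more careful than the paper's in two places: you explicitly justify computing the Hessian trace against a frame orthonormal only at $o$, and you derive the commutation $[X_j^+,\Delta_M]=0$ for all $X_j\in\mfg$ by differentiating $G$-invariance, whereas the paper records it (in Theorem~\ref{thm:casimir-projection}) only for $X\in\mfm$.
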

\begin{proof}
Let $X_1, \ldots X_m, X_{m+1}, \ldots X_n$ be an orthonormal basis of $\mfg$ with respect to the form $B$ which is  compatible with
the decomposition $\mfg=\mfm\oplus\mfk$, i.e. $X_j\in \mfm$ for $1\leq j\leq m$
and $X_j\in\mfk$ for $m+1\leq j\leq n$.
Observe that the projected Casimir operator~$\Omega^+$ in Theorem~\ref{thm:casimir-projection}  takes with this choice of basis the form
$$ \Omega^+=\sum_{j=1}^m (X_j^+)^2\ .$$
Since both $\Delta_M$ and $\Omega^+$ are $G$-invariant by Theorem~\ref{thm:casimir-projection}, then it
suffices to verify they coincide  at the origin.
By definition
$$\Delta_M f = \tr (Y\mapsto \nabla_Y \grad f)\ .$$ 
Since the basis $(X_j^+)_{j=1}^m$ is orthonormal at the origin we have that
\begin{equation*}
\begin{split}
  (\Delta_M f)(o) &= \sum_{j=1}^m \langle \nabla_{X_j^+} \grad f, X_j^+ \rangle_o =
\sum_{j=1}^m (X_j^+)^2 f(o) - \sum_{j=1}^m (\nabla_{X_j^+} X_j^+)_o f\\
&\stackrel{\textrm{Prop.}~\ref{prop:nat-reductive-geodesics}}{=}\sum_{j=1}^m (X_j^+)^2 f(o) =\sum_{j=1}^n (X_j^+)^2 f(o) =\Omega^+_o f\ .
\end{split}
\end{equation*}
In applying Proposition~\ref{prop:nat-reductive-geodesics} we are making use of the fact that $X_j\in\mfm$ for $1\leq j\leq m$ and then we make use of the fact that  $X_j^+|_o=0$ for $m+1\leq j\leq n$.
\end{proof}

\section{Powers of the Laplace and harmonic functions: Proof of Theorem~\ref{thm:homog}}
The proof of Theorem~\ref{thm:homog} is now immediate.
The exact same argument which lets us deduce Theorem~\ref{thm:spheres} from Proposition~\ref{prop:spheres-sum-of-squares}, permits us also to  deduce the absolute monotonicity result in Theorem~\ref{thm:homog} from Theorem~\ref{thm:laplace-sum-of-squares}.

\section{Discussion: Symmetric spaces of the noncompact type}
\label{sec:discussion}
In this note we have not addressed the highly important case of a Riemannian symmetric space $M=G/K$ of the \emph{noncompact} type and, more generally, the case of a noncompact, semisimple Lie group $G$ with $K$ a compact Lie subgroup and where the Riemannian metric on~$M$ is induced from a bi-invariant pseudo-Riemannian mentric on~$G$.
The arguments in the preceding sections do show that one can write the Laplace-Beltrami operator~$\Delta_M$ on $M$ as a sum $\sum_j \pm (X_j^+)^2$, where the vector fields $X_j^+$ commute with $\Delta_M$. However, it is far from clear what implications this expression has for harmonic functions.
It seems to be highly interesting to understand positivity and convexity properties for the sequence of quadratic forms on harmonic functions coming from $\Delta^k\lvert h\rvert^2$.

\begin{bibdiv}
\begin{biblist} 

\bib{agmon}{book}{
   author={Agmon, Shmuel},
   title={Unicit\'{e} et convexit\'{e} dans les probl\`emes diff\'{e}rentiels},
   language={French},
   series={S\'{e}minaire de Math\'{e}matiques Sup\'{e}rieures, No. 13 (\'{E}t\'{e}, 1965)},
   publisher={Les Presses de l'Universit\'{e} de Montr\'{e}al, Montreal, Que.},
   date={1966},
   pages={152},
}

\bib{almgren}{book}{
   author={Almgren, Frederick J., Jr.},
   title={Almgren's big regularity paper},
   series={World Scientific Monograph Series in Mathematics},
   volume={1},
   note={$Q$-valued functions minimizing Dirichlet's integral and the
   regularity of area-minimizing rectifiable currents up to codimension 2;
   With a preface by Jean E. Taylor and Vladimir Scheffer},
   publisher={World Scientific Publishing Co., Inc., River Edge, NJ},
   date={2000},
   pages={xvi+955},
   isbn={981-02-4108-9},
}

\bib{berger}{article}{
   author={Berger, M.},
   title={Les vari\'{e}t\'{e}s riemanniennes homog\`enes normales simplement connexes
   \`a courbure strictement positive},
   language={French},
   journal={Ann. Scuola Norm. Sup. Pisa Cl. Sci. (3)},
   volume={15},
   date={1961},
   pages={179--246},
   issn={0391-173X},
}

\bib{bump-liegroups}{book}{
   author={Bump, Daniel},
   title={Lie groups},
   series={Graduate Texts in Mathematics},
   volume={225},
   edition={2},
   publisher={Springer, New York},
   date={2013},
   pages={xiv+551},
   isbn={978-1-4614-8023-5},
   isbn={978-1-4614-8024-2},
}

\bib{cartan}{article}{
   author={Cartan, Elie},
   title={La g\'eom\'etrie des groupes de transformations},
   language={French},
   journal={J. Math. Pures Appl. (9)},
   volume={6},
   date={1927},
   pages={1--119}
}

\bib{faraut}{article}{
   author={Faraut, J.},
   title={Distributions sph\'{e}riques sur les espaces hyperboliques},
   language={French},
   journal={J. Math. Pures Appl. (9)},
   volume={58},
   date={1979},
   number={4},
   pages={369--444},
   issn={0021-7824},
}
		
\bib{helgason-acta}{article}{
   author={Helgason, Sigurdur},
   title={Differential operators on homogeneous spaces},
   journal={Acta Math.},
   volume={102},
   date={1959},
   pages={239--299},
   issn={0001-5962},
}

\bib{helgason-book1}{book}{
   author={Helgason, Sigurdur},
   title={Differential geometry, Lie groups, and symmetric spaces},
   series={Graduate Studies in Mathematics},
   volume={34},
   note={Corrected reprint of the 1978 original},
   publisher={American Mathematical Society, Providence, RI},
   date={2001},
   pages={xxvi+641},
   isbn={0-8218-2848-7},
}

\bib{helgason-book2}{book}{
   author={Helgason, Sigurdur},
   title={Groups and geometric analysis},
   series={Mathematical Surveys and Monographs},
   volume={83},
   note={Integral geometry, invariant differential operators, and spherical
   functions;
   Corrected reprint of the 1984 original},
   publisher={American Mathematical Society, Providence, RI},
   date={2000},
   pages={xxii+667},
   isbn={0-8218-2673-5},
}

\bib{jacobson-liealgebras}{book}{
   author={Jacobson, Nathan},
   title={Lie algebras},
   note={Republication of the 1962 original},
   publisher={Dover Publications, Inc., New York},
   date={1979},
   pages={ix+331},
   isbn={0-486-63832-4},
}

\bib{kirillov}{book}{
   author={Kirillov, A., Jr.},
   title={An introduction to Lie groups and Lie algebras},
   series={Cambridge Studies in Advanced Mathematics},
   volume={113},
   publisher={Cambridge University Press, Cambridge},
   date={2008},
}

\bib{lipp-man}{article}{
   author={Lippner, Gabor},
   author={Mangoubi, Dan},
   title={Harmonic functions on the lattice: absolute monotonicity and
   propagation of smallness},
   journal={Duke Math. J.},
   volume={164},
   date={2015},
   number={13},
}

\bib{nomizu}{article}{
   author={Nomizu, Katsumi},
   title={Invariant affine connections on homogeneous spaces},
   journal={Amer. J. Math.},
   volume={76},
   date={1954},
   pages={33--65},
   issn={0002-9327},
}

\bib{oneill}{book}{
   author={O'Neill, Barrett},
   title={Semi-Riemannian geometry},
   series={Pure and Applied Mathematics},
   volume={103},
   note={With applications to relativity},
   publisher={Academic Press, Inc. [Harcourt Brace Jovanovich, Publishers],
   New York},
   date={1983},
   pages={xiii+468},
   isbn={0-12-526740-1},
}

\bib{poritsky}{article}{
   author={Poritsky, Hillel},
   title={Generalizations of the Gauss law of the spherical mean},
   journal={Trans. Amer. Math. Soc.},
   volume={43},
   date={1938},
   number={2},
   pages={199--225},
   issn={0002-9947},
}

\bib{yovel}{thesis}{ 
title={Positivity properties of harmonic functions on the sphere}, 
type={M.Sc. Thesis},
author={Yovel, Rachel}, 
organization={The Hebrew University of Jerusalem}, 
date={2015},
} 

\end{biblist} 
\end{bibdiv}

\end{document}